\documentclass[twocolumn]{article}
\usepackage[utf8]{inputenc}
\usepackage[left=2cm, right=2cm, top=2cm]{geometry} 
\usepackage[small]{titlesec}
\usepackage{etoolbox}
\usepackage{amsthm}
\makeatletter
\patchcmd{\ttlh@hang}{\parindent\z@}{\parindent\z@\leavevmode}{}{}
\patchcmd{\ttlh@hang}{\noindent}{}{}{}
\makeatother

\newcommand{\minimize}[3]{\begin{array}{rl}
		{\underset{#2}{\textrm{minimize}}} & \begin{aligned}[t] 
			#1
		\end{aligned}  \\[15pt]
		\textrm{subject to} &
		\begin{aligned}[t] 
			#3
		\end{aligned}
	\end{array}}

	\usepackage{mathptmx} 
	\usepackage{times} 
	\usepackage{amsmath} 
	\usepackage{amssymb}  
	\usepackage{algorithm}
	\usepackage{algpseudocode}
	\usepackage{color}
	\usepackage{makecell}
	\usepackage{subfig, float}
	\usepackage{tikz,pgfplots}
	\usepackage{epstopdf}
	\usepgfplotslibrary{external} 
	
	\newlength\figureheight 
	\newlength\figurewidth  
	
	\setlength{\textfloatsep}{0.3cm}
	
	\newtheorem{mydef}{Definition}
	
	\newtheorem{mythe}{Theorem}
	\newtheorem{mycor}{Corollary}
	\newtheorem{myrem}{Remark}
	\DeclareMathOperator*{\argmin}{arg\,min}
	\DeclareMathAlphabet{\mathcalOld}{OMS}{cmsy}{m}{n}
	\DeclareMathOperator*{\minimizer}{minimize}
	\DeclareMathOperator*{\subjecttoo}{subject\:to}

\title{\LARGE \bf
Improved Path Planning by Tightly Combining Lattice-based Path Planning and Numerical Optimal Control	 
}

\date{}
\author{Kristoffer Bergman, Oskar Ljungqvist and Daniel Axehill}

\begin{document}
	
	\algblock{ParFor}{EndParFor}
	\algnewcommand\algorithmicparfor{\textbf{parfor}}
	\algnewcommand\algorithmicpardo{\textbf{do}}
	\algnewcommand\algorithmicendparfor{\textbf{end\ parfor}}
	\algrenewtext{ParFor}[1]{\algorithmicparfor\ #1\ \algorithmicpardo}
	\algrenewtext{EndParFor}{\algorithmicendparfor}

	\maketitle
	\thispagestyle{empty}
	\pagestyle{empty}

		\textbf{\textit{Abstract ---}}\textbf{This paper presents a unified optimization-based path planning approach to efficiently compute locally optimal solutions to advanced path planning problems. The approach is motivated by first showing that a lattice-based path planner can be cast and analyzed as a bilevel optimization problem. This information is then used to tightly integrate a lattice-based path planner and numerical optimal control in a novel way. The lattice-based path planner is applied to the problem in a first step using a discretized search space, where system dynamics and objective function are chosen to coincide with those used in a second numerical optimal control step. As a consequence, the lattice planner provides the numerical optimal control step with a resolution optimal solution to the problem, which is highly suitable as a warm-start to the second step. This novel tight combination of a sampling-based path planner and numerical optimal control makes, in a structured way, benefit of the former method's ability to solve combinatorial parts of the problem and the latter method's ability to obtain locally optimal solutions not constrained to a discretized search space. Compared to previously presented combinations of sampling-based path planners and optimization, the proposed approach is shown in several path planning experiments to provide significant improvements in terms of computation time, numerical reliability, and objective function value.}

\section{Introduction} \label{sec:intro}
The problem of computing locally optimal paths for autonomous vehicles such as self-driving cars, unmanned aerial vehicles and autonomous underwater vehicles has recently been extensively studied~\cite{lavalle2006planning, paden2016survey}. However, the task of finding (locally) optimal motions for nonholonomic vehicles in narrow environments is still considered difficult~\cite{zhang2018autonomous}. In this paper, optimal path planning is defined as the problem of finding a feasible and collision-free path from the vehicle initial state to a desired goal state, while a specified performance measure is minimized.

There exist several methods to generate optimized paths for autonomous vehicles. One common approach is to use B-splines or Bezier curves for differentially flat systems, either to smoothen a sequence of waypoints~\cite{piazzi2007mmb,yang2010analytical} or as steering functions within a sampling-based motion planner~\cite{oliveira2018combining}. The use of these methods are computationally efficient since the system dynamics can be described analytically. However, these methods are not applicable to non-flat systems, such as, e.g., many truck and trailer systems~\cite{rouchon1993flatness}. Furthermore, it is difficult to optimize the maneuvers with respect to a general performance measure. 

\begin{figure}
	\centering
	\setlength\figureheight{0.27\textwidth}
	\setlength\figurewidth{0.35\textwidth}
	\input{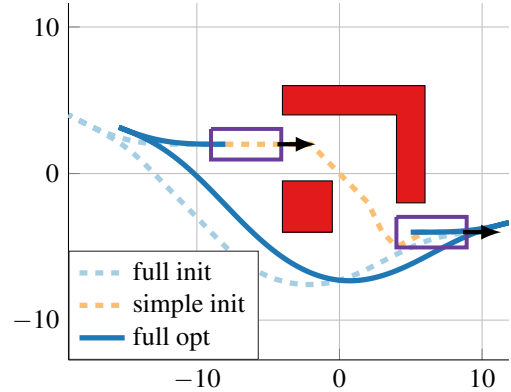}
	\caption{\small \label{fig:infeasible} Feasibility issues. The solution from a path planner based on a simplified geometric model (dashed yellow) provides an initialization from where it is impossible for an \textsc{ocp} solver to find a feasible solution. On the other hand, an initialization based on the full model (dashed blue) enables reliable convergence.}
\end{figure} 

Another popular method is to formulate the problem as an optimal control problem (\textsc{ocp}). One approach is to use mixed integer formulations, which can be computationally demanding to solve~\cite{grossmann2002review}. Another approach is to use numerical optimal control (\textsc{noc}) and transform the problem to a nonlinear program (\textsc{nlp}). Due to non-convex constraints introduced by obstacles and nonlinear system dynamics, a proper initialization strategy is crucial to converge to a good local optimum~\cite{nocedal2006numerical}. A straightforward initialization strategy is to use linear interpolation~\cite{rao2009survey}, this can however often lead to convergence issues in cluttered environments~\cite{bergman2018combining,zhang2018autonomous}. A more sophisticated initialization strategy is to use the solution from a sampling-based path planner. In previous work in the literature, the vehicle models used by the path planner for initialization are simplified; either they disregard the system dynamics completely (geometric planning)~\cite{lavalle2006planning, campos2017hybrid}, or partially (respecting only kinematic constraints)~\cite{andreasson2015fast, zhang2018autonomous}. Using a geometric path planning initialization is shown in~\cite{zhang2018optimization} to cause problems for vehicles with nonholonomic constraints (illustrated in Fig.~\ref{fig:infeasible}). Initializations based on a kinematic model will in general be infeasible in the actual \textsc{ocp} to be solved, and potentially not homotopic to a dynamically feasible solution~\cite{stoneman2014embedding}. Furthermore, the objective function in the sampling-based planner can only consider states that are represented in the chosen simplified model description, which might cause it to find a path far from a local minimum.

A popular deterministic sampling-based path planner is the so called lattice-based path planner, which uses a finite set of precomputed motion segments online to find a resolution optimal solution to the path planning problem~\cite{pivtoraiko2009differentially}. However, since the lattice planner uses a discretized search space, the computed solution can be noticeably suboptimal and it is therefore desirable to improve this solution~\cite{oliveira2018combining,andreasson2015fast}. 

The main contribution of this paper is to combine a lattice-based path planner and \textsc{noc} in a novel way to generate locally optimal solutions to advanced path planning problems. It is done by first introducing a bilevel optimization formulation to analyze and understand the relation between the original optimal path planning problem and the lattice-basted approximation. This way of formulating the problem provides a new tool to analyze and understand the suboptimalities of the lattice-based method. It is shown how the solution from a lattice planner can be improved by initializing a \textsc{noc} solver that efficiently solves the problem to local optimality. The idea is that the lattice-based path planner solves the combinatorial aspects (e.g which side to pass an obstacle) while \textsc{noc} is used to improve the continuous aspects of the solution keeping the combinatorial part fixed. Furthermore, this work goes beyond previous path planning initialization strategies since the initialization is not only feasible, but also optimized using the same objective function as in the improvement step. It is shown in several numerical examples that the proposed approach results in significantly reduced computation time, reliable convergence of the \textsc{noc} solver and generally improved solutions compared to previously used initialization strategies or lattice-based planners alone.

\section{Problem formulation}
In this section, the continuous optimal path planning problem is formulated as an \textsc{ocp}. Furthermore,  we pose a commonly used approximation of the original problem in terms of a lattice-based path planner. 
\subsection{The optimal path planning problem}  
In this work, the optimal path planning problem is defined as follows 
\begin{equation}
\minimize{ J = \int_{0}^{S_g} L(x(s), u(s), q(s)) \mathrm{d}s    }{ u(\cdot), S_g, q(\cdot)  }{&x(0) = x_{\mathrm{init}}, \quad x(S_g)= x_{\mathrm{goal}}, \\
	& x'(s) = f_{q(s)}(x(s), u(s)), \\
	& x(s) \in \mathcalOld{X}_{\mathrm{free}} \cap \mathcalOld{X}_{\mathrm{valid}}, \\
	& q(s) \in \mathcalOld{Q}, \quad u(s) \in \mathcalOld{U}.	  } \label{eq:hybrid_ocp}
\end{equation}
Here, $s > 0$ is defined as the distance traveled by the system, $x \in \mathbf{R}^n$ is the state vector and $u \in \mathbf{R}^m$ is the continuous control input for the system. The derivative of the state with respect to the distance traveled is defined as $\frac{\mathrm{d}x}{\mathrm{d}s} = x'(s)$. $\mathcalOld{X}_{\mathrm{free}}$ represents the obstacle-free region and $\mathcalOld{X}_{\mathrm{valid}}$ and $\mathcalOld{U}$ the feasible sets of states and control signals, respectively. There is also a discrete input signal $q(s) \in \mathcalOld{Q} = \{1, 2, \ldots, N \}$ which enables the selection between $N$ modes of the system. 
It is here assumed that there are finitely many switches in finite distance. 
The system mode determines the vector field $f_{q} \in \mathcalOld{F}$ that describes the current equation of motion~\cite{hedlund1999optimal}. The system mode can for example represent the direction of motion (which is the main use in this paper). However, the results presented in Section~\ref{sec:bilevel}-\ref{sec:improve} also hold for a set $\mathcalOld{F}$ representing a more general switched dynamical system. One such example is morphing aerial vehicles~\cite{falanga2019foldable}. The objective function $J$ to be minimized in \eqref{eq:hybrid_ocp} is specified by the cost function $L(x,u,q)$. This function can depend on the continuous variables as well as the system mode, where the latter enables the possibility of associating each system mode with a unique cost function.

Due to that the problem of solving \eqref{eq:hybrid_ocp} contains the combinatorial aspects of selecting the system mode and the route to avoid obstacles, as well as continuous nonlinear system dynamics, finding a feasible and locally optimal solution is a difficult problem. Hence, approximate methods aiming at feasible, suboptimal solutions are commonly used~\cite{lavalle2006planning}, where the lattice-based path planner provides one alternative. 

\subsection{Lattice-based path planner}
The main idea with a lattice-based path planner is to restrict the controls to a discrete subset of the available actions (motion primitives) and as a result transform the optimization problem \eqref{eq:hybrid_ocp} into a discrete graph search problem. In this paper, the so-called state lattice methodology will be used. The methodology is mainly suitable for position invariant systems, since then motion primitives need only be computed from states with a
position in the origin, and can then be translated to the desired position online~\cite{pivtoraiko2011kinodynamic}. 

The construction of a state lattice can be divided into three steps. First, a desired state space discretization $\mathcalOld{X}_d$ is defined that represents the reachable states in the graph. After the discretization has been selected, the connectivity in the graph is chosen by selecting which neighboring states to connect. Finally, the set of motion primitives $\mathcalOld{P}$ is constructed by generating $K$ motion segments (for example using an \textsc{noc} solver as in~\cite{pivtoraiko2009differentially, ljungqvist2017lattice}) needed to connect the neighboring states, without considering obstacles. A motion primitive \mbox{$m \in \mathcalOld{P}$} is defined as
\begin{equation} \label{eq:motionPrimitive}
m =  \big( x(s), u(s), q \big)  \in  \mathcalOld{X}^{\mathrm{valid}} \times \mathcalOld{U}, \;   s \in [ 0, S],	\end{equation}
and represents a feasible path of length $S$ in a fixed system mode $q \in \mathcalOld{Q}$ that moves the system from an initial state $x(0)\in \mathcalOld{X}_d$ to a final state $x(S)\in \mathcalOld{X}_d$ by applying the control inputs $u(\cdot)$.

After the set of motion primitives has been generated, the original path planning problem \eqref{eq:hybrid_ocp} can be approximated by the following discrete \textsc{ocp}:
\begin{subequations}
	\label{eq:planning}
	\begin{align}
	\minimizer_{\{ m_k , q_k \}_{k=1}^{M},\hspace{0.5ex}M}\hspace{1ex}
	& J_D = \sum_{k=1}^{M} L_u(m_{k}) \label{eq:objLattice}  \\
	\subjecttoo \hspace{2ex}
	& x_1 = x_{\mathrm{init}}, \quad x_{M+1} = x_{\mathrm{goal}},  \\ \label{eq:primStateTrans}
	& x_{k+1} = f_{m}(x_k, m_{k}), \\ \label{eq:primInSet}
	& m_{k}  \in \mathcalOld{P}(x_k, q_k), \\ 
	& q_k \in \mathcalOld{Q}, \\  \label{eq:trajInFreeSpace}
	& c(x_k,m_{k}) \in \mathcalOld{X}_{\mathrm{free}}.
	\end{align} 
\end{subequations}  
The decision variables are the number of phases $M$, the system mode sequence $\{ q_k\}_{k=1}^{M}$ and the applied motion primitive sequence $\{ m_k \}_{k=1}^{M}$. The state transition equation in \eqref{eq:primStateTrans} describes the successor state $x_{k+1}$ after $m_{k}$ is applied from $x_k$ and \eqref{eq:primInSet} ensures that $m_k$ is selected from the set of applicable motion primitives at $(x_k, q_k)$. The constraint in~\eqref{eq:trajInFreeSpace}, ensures that the traveled path of the vehicle when $m_k$ is applied from $x_k$ does not collide with any obstacles. 
Finally, the objective function in \eqref{eq:objLattice} is given by 
\begin{align} \label{eq:obj_f}
L_u(m)  = \int_{0}^{S} L(x(s), u(s), q) \mathrm{d}s.
\end{align}

The use of the approximation in~\eqref{eq:planning} enables the use of efficient graph search methods online (such as A$^*$, ARA$^*$, etc.), making it real-time applicable~\cite{pivtoraiko2009differentially, ljungqvist2017lattice}. The solution is also guaranteed to be dynamically feasible. On the downside, the solutions from a lattice planner often suffer from discretization artifacts~\cite{oliveira2018combining, andreasson2015fast}, making it desirable to smoothen the state lattice solution. Another limitation with graph search is that it is only possible to plan from and to states within the selected state space discretization~\cite{lavalle2006planning}.

\section{Bilevel optimization}\label{sec:bilevel}
In this section, the two problems in~\eqref{eq:hybrid_ocp} and
\eqref{eq:planning} will be related by rewriting the original problem formulation \eqref{eq:hybrid_ocp} into a bilevel optimization problem (\textsc{bop})~\cite{colson2007overview}. It will be shown that this new formulation of the problem allows for an insightful interpretation of the standard lattice solution methodology. In particular it will be used to connect the methodology to parametric optimization, highlight suboptimality properties, and discuss the choice of objective function used at different parts of the lattice-based framework.

A \textsc{bop} is an optimization problem where a subset of the variables are constrained to be an optimal solution to another optimization problem called the lower-level problem. Analogously, the problem on the first level is called the upper-level problem. A general \textsc{bop} can be written as~\cite{colson2007overview}
\begin{equation}
\minimize{F(x,y)}{x, y}{&(x,y) \in \Upsilon \\ 
	&y \in \underset{z }{\argmin} \; \{f(x,z) : (x,z) \in \Omega \}}   \label{eq:genBilelvel}
\end{equation}
where $F(x,y)$ and $f(x,z)$ represent the upper and lower-level objective functions, respectively, and
\begin{align*}
& \Upsilon =  \{ (x,y) \; | \; G_i(x,y) \leq 0, \; i \in  \{1,\ldots,C\} \; \}  \\
& \Omega =  \{ (x,z) \; | \; g_i(x,z) \leq 0, \; i \in  \{1,\ldots, D\} \; \}
\end{align*} 
represent the upper and lower-level feasible sets, which are represented by $C$ and $D$ inequality constraints, respectively. Typically, a subset of the optimization variables in the upper-level problem are considered as parameters to the lower-level problem. Seen from the upper-level problem, the optimality requirement of the lower-level problem is in general a non-convex constraint. Comparably simple examples of bilevel problems, e.g., where the problems on both levels are quadratic programming problems, can be solved by representing the solution to the lower level problem by, e.g., encoding the \textsc{kkt} conditions using mixed integer optimization~\cite{colson2007overview} or explicitly representing the lower-level solution parametrically using a solution obtained from parametric programming~\cite{faisca2007parametric}. It will be shown in this work that the lattice planner can be interpreted as a way of solving a bilevel formulation of~\eqref{eq:hybrid_ocp} using the latter alternative, i.e., representing the lower-level solution explicitly as a (sampled) parametric solution.

\subsection{Bilevel optimization problem reformulation}
It will now be shown how the path planning problem in \eqref{eq:hybrid_ocp} can be reformulated as a \textsc{bop}. Let $L_u(m)$ from \eqref{eq:obj_f} represent the upper-level objective function and introduce lower-level cost function $L_l(x, u, q)$. Assume that 
\begin{align} \label{eq:objectiveRelation}
L_u(m) = \int_{0}^{S}L_l\big(x(s), u(s), q\big)\mathrm{d}s.
\end{align} 
After dividing the path planning problem in \eqref{eq:hybrid_ocp} in $M$ path segments where along each one the system mode is kept constant, it is possible to cast it as an equivalent bilevel (dynamic) optimization problem in the form
\begin{equation}
\minimize{J_u = \sum_{k=1}^{M}L_u\big(m_k\big) }{\{x^0_k, x^g_k, q_k, m_k\}_{k=1}^{M},\hspace{0.5ex}M}{&x^0_1 = x_{\mathrm{init}}, \quad x^g_{M}= x_{\mathrm{goal}}, \\ 
	&x_{k}^0 = x_{k-1}^{g}, \quad q_k \in \mathcalOld{Q}, \\	 
	& m_k \in \underset{(x,u,\bar{q},S)}{\argmin} \text{ } \eqref{eq:llup}}   \label{eq:ulup}
\end{equation}
where the intial state $x^0_k$, final state $x^g_k$ and system mode
$q_k$ for phase $k$ are the upper-level optimization variables
considered as parameters to the lower-level optimization problem. The
constraints given by $x^{0}_{k} =x^{g}_{k-1}$ ensure that the path is
continuous between adjacent path segments. Furthermore, the
corresponding lower-level optimization problem in \eqref{eq:ulup} can
formally be specified as the following multi-parametric \textsc{ocp} (mp-\textsc{ocp})
\begin{equation}  \label{eq:llup}
\begin{array}{rl}
{J_l^*(x^0, x^g, q) = \underset{u(\cdot), x(\cdot), \bar{q}, S}{\textrm{minimize}}} &  \begin{aligned}[t] 
\int_{0}^{S}L_l\big(x(s), u(s), \bar{q}\big) \mathrm{d}s
\end{aligned}  \\[15pt]
\textrm{subject to} &
\begin{aligned}[t] 
& x(0) = x^0, \quad x(S) = x^g,  \\
& x'(s) =  f_{\bar{q}}(x(s), u(s)), \\
& x(s) \in \mathcalOld{X}_{\mathrm{free}} \cap \mathcalOld{X}_{\mathrm{valid}}, \\
& u(s) \in \mathcalOld{U}, \quad \bar{q} = q. 
\end{aligned}
\end{array}
\end{equation}
where $x^0$, $x^g$, and $q$ are considered as parameters from the upper-level problem. Note the similarities between this problem and \eqref{eq:hybrid_ocp}. Here, the main difference is that the system mode is fixed and the path length $S$ typically shorter than $S_g$. 

Above, it was assumed that the objective functions are related as in \eqref{eq:objectiveRelation},  which was necessary in order for the equivalence between \eqref{eq:hybrid_ocp} and \eqref{eq:ulup} to hold. An alternative is to select the objective functions in the two levels more freely in a way that does not satisfy \eqref{eq:objectiveRelation}, with the price of breaking the equivalence between \eqref{eq:hybrid_ocp} and \eqref{eq:ulup}. If such a choice is still made, the solution to \eqref{eq:ulup} with \eqref{eq:llup} will in general no longer be an optimal solution to \eqref{eq:hybrid_ocp}. However, the use of different objective functions allows in practice for a division of the specification of the problem such as finding a minimum time solution by combining, e.g., low-lateral-acceleration solutions from the lower-level problem~\cite{ljungqvist2017lattice}. A bilevel interpretation of this is that the lower-level problem restricts the family of solutions the upper-level problem can use to compose an optimal solution.

\subsection{Analysis of solution properties using bilevel arguments}\label{sec:latticeConnect}
From a practical point of view, the \textsc{bop} consisting of \eqref{eq:ulup} and \eqref{eq:llup} is in principle harder to solve than the standard formulation of the optimal control problem in \eqref{eq:hybrid_ocp}. However, the formulation as a bilevel problem introduces possibilities to approximate the solution by sampling the solution to the lower-level mp-\textsc{ocp} as a function of its parameters. The result of this sampling is that the solution to \eqref{eq:llup} is only computed for $K$ predefined parameter combinations $(x^0_i, x^{g}_i, q_i) \in \mathcalOld{A}, \; i \in \{1, \ldots, K\}$, where $\mathcalOld{A}$ is the user-defined set of combinations. These motion segments obtained by solving the mp-\textsc{ocp} for $K$ parameters together constitute the motion primitive set $\mathcalOld{P}$ used in~\eqref{eq:planning}. An interpretation of this procedure is hence that $\mathcalOld{P}$ used in a lattice planner is a coarsely sampled parametric solution to the mp-\textsc{ocp} in~\eqref{eq:llup} which can be used to represent the optimal solution of the lower-level problem when the upper-level problem is solved. The sampling introduce the well-known suboptimality of only being able to find solutions within the selected discretization~\cite{lavalle2006planning}. However, this approximation makes it possible to solve the bilevel problem in real-time in the form of a lattice planner.

To be able to compute the motion primitives offline, the obstacle avoidance constraints in~\eqref{eq:llup} are disregarded in the lower-level problem and instead handled during online planning in the upper-level problem. After this rearrangement, the \textsc{bop} in~\eqref{eq:ulup} is equivalent to the lattice formulation~\eqref{eq:planning}. In the following theorem, it is shown that this rearrangement of constraints makes it impossible to obtain an optimal solution within the selected discretization since the lower-level problems are not required to satisfy the obstacle avoidance constraints.

\begin{mythe}
	Let $\mathcal{P}_1$ denote the \textsc{bop}
	\begin{equation} \label{eq:thmBilevelOrig}
	\minimize{F(x,y)}{x , y }{ 
		&y \in \underset{z}{\argmin} \; \{F(x,z) : (x,z) \in \Omega\} }
	\end{equation}
	with optimal objective function value $F(x^*_1, y^*_1)$. Furthermore, let $\mathcal{P}_2$ denote the \textsc{bop}
	\begin{equation} \label{eq:thmBilevelSimplified}
	\minimize{F(x,y)}{x , y }{& (x,y) \in \Omega \\
		&y \in \underset{z}{\argmin} \; \{F(x,z) \} }
	\end{equation} 
	with optimal objective function value $F(x^*_2, y^*_2)$. It then holds that $F(x^*_1, y^*_1)\leq F(x^*_2, y^*_2)$.
\end{mythe}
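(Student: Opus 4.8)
The plan is to establish the inequality by a feasible-set-inclusion argument, exploiting the fact that both $\mathcal{P}_1$ and $\mathcal{P}_2$ minimize the \emph{same} upper-level objective $F$. If I can show that every pair $(x,y)$ feasible for $\mathcal{P}_2$ is also feasible for $\mathcal{P}_1$, then the feasible set of $\mathcal{P}_2$ is contained in that of $\mathcal{P}_1$, and minimizing a common objective over a larger set can only produce a value no larger than minimizing over a subset. This yields $F(x^*_1,y^*_1) \leq F(x^*_2,y^*_2)$ immediately, with no appeal to convexity, differentiability, or any structure beyond the two definitions.

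First I would fix an arbitrary pair $(x,y)$ feasible for $\mathcal{P}_2$ and unpack the requirement from \eqref{eq:thmBilevelSimplified}: it demands both $(x,y) \in \Omega$ and that $y$ be a global, \emph{unconstrained} minimizer of $z \mapsto F(x,z)$. The target is the single feasibility condition of $\mathcal{P}_1$ in \eqref{eq:thmBilevelOrig}, namely that $y \in \argmin_z\{F(x,z):(x,z)\in\Omega\}$. I would argue this through two inequalities. Since minimizing over the restricted set $\{z : (x,z) \in \Omega\}$ can never return a value smaller than the unconstrained minimum, the constrained optimum is bounded below by $F(x,y)$. Conversely, the constraint $(x,y) \in \Omega$ certifies that $y$ is itself an admissible candidate for the constrained lower-level problem, so the constrained optimum is at most $F(x,y)$. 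The two bounds force equality, establishing that $y$ attains the constrained minimum and hence is feasible for $\mathcal{P}_1$.

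The one step that needs care — and the only place the hypotheses are genuinely used — is the second inequality above: without the upper-level constraint $(x,y)\in\Omega$ present in $\mathcal{P}_2$, the unconstrained minimizer $y$ could fall outside the lower-level feasible region of $\mathcal{P}_1$, in which case it would not qualify as a constrained minimizer at all. It is precisely this constraint that pins $y$ inside $\Omega$ and makes the unconstrained and constrained optima coincide. I would also note that the $\argmin$ sets may contain several points, so the argument is phrased as set membership rather than uniqueness; what matters are the inequalities on optimal values, which hold regardless of how ties are broken. Once the inclusion of feasible sets is in hand, the conclusion follows by monotonicity of the infimum under set inclusion.
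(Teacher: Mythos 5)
Your proof is correct and follows essentially the same route as the paper's: show that every point feasible for $\mathcal{P}_2$ is feasible for $\mathcal{P}_1$ (i.e., $Z_2 \subseteq Z_1$), then invoke monotonicity of the minimum of a common objective under set inclusion. In fact, your two-inequality argument explicitly justifies the inclusion step that the paper's proof simply asserts with a ``Hence,'' so your write-up is, if anything, slightly more complete.
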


\begin{proof}
	The feasible set of $\mathcal{P}_1$ is $Z_1 = \{(x,y) \; | \; y \in \argmin _z \; \{F(x,z) : (x,z) \in \Omega \} \}$, and the feasible set of $\mathcal{P}_2$ is $Z_2 = \{(x,y) \; | \; \big( y \in \argmin _z \; \{F(x,z) \} \big) \cap \Omega \; \}$. Hence, any point in $Z_2$ is also in $Z_1$, i.e., $Z_2 \subseteq Z_1 \implies$ $F(x^*_1, y^*_1)\leq F(x^*_2, y^*_2)$.
\end{proof}

\begin{mydef}
	The active set $\mathcalOld{A}(x,y)$ at a feasible pair $(x,y)$ of \eqref{eq:thmBilevelOrig} consists of the inequality constraints that hold with equality~\cite{nocedal2006numerical}, i.e.,
	\begin{equation*}
	\mathcalOld{A}(x,y) = \{i \in \{1 \ldots D \} \; | \; g_i(x,y) = 0 \}.
	\end{equation*}
\end{mydef}

\begin{mydef}
	Let $(x^*, y^*)$ be an optimal solution to an optimization problem with \textsc{kkt} conditions satisfied with Lagrange multipliers $\lambda^*$ associated to the inequality constraints in $\Omega$. A constraint $g_i(x,y)$ in $\Omega$ is then said to be \textit{strongly active} if $g_i(x^*, y^*) = 0 $ and $\lambda^*_i > 0$~\cite{nocedal2006numerical}. 
\end{mydef}

\begin{mycor}
	\label{cor:ucnstrsubopt}
	Assume that the optimal solution $(x_1^*, y_1^*)$ to $\mathcal{P}_1$ in~\eqref{eq:thmBilevelOrig} is unique. Then, if there exists an $i$ such that $g_i(x_1^*, y_1^*)$ is strongly active in the lower-level problem, it holds that $F(x^*_1, y^*_1) < F(x^*_2, y^*_2)$ where $(x_2^*, y_2^*)$ is the optimal solution to $\mathcal{P}_2$ in~\eqref{eq:thmBilevelSimplified}.
\end{mycor}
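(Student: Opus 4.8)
The plan is to upgrade the weak inequality $F(x_1^*,y_1^*)\le F(x_2^*,y_2^*)$ already furnished by the theorem to a strict one by ruling out the possibility of equality. So I would argue by contradiction and assume $F(x_1^*,y_1^*)=F(x_2^*,y_2^*)$. Recalling from the theorem's proof that $Z_2\subseteq Z_1$, the point $(x_2^*,y_2^*)$ is feasible for $\mathcal{P}_1$ and attains the optimal value of $\mathcal{P}_1$; hence it is itself an optimal solution of $\mathcal{P}_1$. By the assumed uniqueness of the optimizer of $\mathcal{P}_1$, this forces $(x_2^*,y_2^*)=(x_1^*,y_1^*)$.

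The consequence I would extract from this identification is that $(x_1^*,y_1^*)$ must lie in $Z_2$. By the definition of $Z_2$, membership requires that $y_1^*$ be an \emph{unconstrained} minimizer of the lower-level objective, i.e. $y_1^*\in\argmin_z F(x_1^*,z)$ with the set $\Omega$ dropped. A first-order necessary condition for such a minimizer is $\nabla_z F(x_1^*,y_1^*)=0$. The whole argument therefore reduces to showing that the strong-activity hypothesis is incompatible with this stationarity.

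To close the contradiction I would invoke the \textsc{kkt} stationarity condition that underlies the definition of strong activity. At the optimal lower-level solution of $\mathcal{P}_1$ for the parameter value $x_1^*$, stationarity reads $\nabla_z F(x_1^*,y_1^*)+\sum_{j}\lambda_j^*\,\nabla_z g_j(x_1^*,y_1^*)=0$, where by complementary slackness only active constraints carry nonzero multipliers. Since by assumption some constraint $g_i$ is strongly active, $\lambda_i^*>0$, the multiplier vector is nonzero on the active set, so $\nabla_z F(x_1^*,y_1^*)=-\sum_{j}\lambda_j^*\,\nabla_z g_j(x_1^*,y_1^*)\neq 0$. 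This contradicts $\nabla_z F(x_1^*,y_1^*)=0$, so the assumed equality of optimal values is untenable, and combined with the theorem I conclude $F(x_1^*,y_1^*)<F(x_2^*,y_2^*)$.

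The step I expect to be the main obstacle is the last inference, namely deducing $\nabla_z F\neq 0$ from $\lambda_i^*>0$. The cancellation $\sum_j\lambda_j^*\,\nabla_z g_j=0$ with a positive multiplier is only precluded once one knows the gradients of the active constraints are linearly independent, so a constraint qualification such as \textsc{licq} must be assumed; it is in fact implicitly needed already for the \textsc{kkt} multipliers in the definition of strong activity to be well-defined. Under that qualification a nontrivial combination of the linearly independent active gradients cannot vanish, which is exactly what makes the positive-multiplier hypothesis bite. An equivalent and perhaps more intuitive phrasing of the same obstacle is the sensitivity interpretation: a strictly positive multiplier means relaxing $g_i$ strictly improves the lower-level optimum, so dropping $\Omega$ altogether, as $\mathcal{P}_2$ does in its lower level, must move the minimizer away from $y_1^*$, which is precisely the geometric content of $y_1^*\notin\argmin_z F(x_1^*,z)$.
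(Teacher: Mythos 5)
Your proof is correct and, at the skeleton level, matches the paper's: both arguments hinge on (i) using uniqueness of $(x_1^*,y_1^*)$ over $Z_1 \supseteq Z_2$ to turn the theorem's weak inequality into a strict one, and (ii) showing that strong activity prevents $(x_1^*,y_1^*)$ from lying in $Z_2$. You arrange (i) as a contradiction (equality of values would force $(x_2^*,y_2^*)=(x_1^*,y_1^*)\in Z_2$), whereas the paper argues directly ($(x_1^*,y_1^*)\in Z_1\setminus Z_2$, so no point of $Z_2$ attains a value $\leq F(x_1^*,y_1^*)$); that difference is cosmetic. The substantive divergence is in step (ii). The paper asserts, via the sensitivity interpretation of a positive multiplier, that $\bigl(x_1^*,\argmin_z\{F(x_1^*,z)\}\bigr)\notin\Omega$, i.e.\ the unconstrained lower-level minimizer is infeasible; you instead argue at the level of first-order conditions: membership in $Z_2$ would make $y_1^*$ unconstrained-stationary, $\nabla_z F(x_1^*,y_1^*)=0$, contradicting \textsc{kkt} stationarity with $\lambda_i^*>0$ \emph{provided} the active-constraint gradients cannot cancel. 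Your version makes explicit what the paper leaves silent: both arguments require a constraint qualification (e.g.\ \textsc{licq}, or uniqueness of multipliers) for the positive multiplier to bite. The caveat is real and shared: in a lower-level problem with $F(z)=z^2$ and $\Omega=\{z \;|\; z\le 0,\ -z\le 0\}$, the point $z^*=0$ satisfies \textsc{kkt} with $\lambda_1=\lambda_2=1>0$, so both constraints are strongly active under the paper's definition, yet $z^*=0$ is also the unconstrained minimizer, and the key exclusion step fails. So your route is marginally more demanding (differentiability plus an explicit constraint qualification) but more honest about the hypotheses, while the paper's sensitivity phrasing is shorter but conceals the same regularity assumption.
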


\begin{proof}
	Since there exists at least one constraint which is strongly active at the lower level, it follows that $(x^*_1, y^*_1) \notin Z_2$, since $(x_1^*, \argmin_z \; \{F(x_1^*,z) \}) \notin \Omega$. Hence, $(x_1^*, y_1^*) \in Z_1\setminus Z_2$. Since $(x_1^*, y_1^*)$ is the unique optimal solution to $\mathcal{P}_1$ over \mbox{$Z_1 \supseteq Z_2$}, it follows that $\nexists \; (x_2^*,y_2^*) \in Z_2:F(x^*_2, y^*_2) \leq F(x^*_1, y^*_1)$. Hence, $F(x^*_1, y^*_1) < F(x^*_2, y^*_2)$.
\end{proof}

An interpretation of Corollary~\ref{cor:ucnstrsubopt} is that if the optimal solution to~\eqref{eq:ulup} is ``strongly'' in contact with the environment, then it is not in general possible to obtain an optimal solution using solutions to the lower-level problem (i.e., motion primitives) computed \emph{without considering obstacles}. Note that these effects are beyond the fact that lower-level problems are sampled on a grid. The lower-level family of solutions is no longer optimal, instead the solutions need to adapt to the surrounding environment to become optimal, which is not a part of the standard lattice planning framework.

It will now be shown that the consequences of the suboptimality aspects discussed in this section of the approximate solution to \eqref{eq:hybrid_ocp} obtained by solving~\eqref{eq:planning} using a lattice planner, can be efficiently reduced using \textsc{noc} employing the solution from the lattice planner as a good warm-start.

\section{Improvement using numerical optimal control}\label{sec:improve}
In this section, we propose to use \textsc{noc} to improve the approximate solution computed by the lattice planner. By letting the system mode sequence $\sigma =\{q_k\}_{k=1}^M$ be fixed to the solution from the lattice planner, the following \textsc{ocp} is obtained:

\begin{equation}
\minimize{\sum_{k=1}^{M} \int_{0}^{S_k}L\big(x_k(s), u_k(s), \sigma[k] \big)\mathrm{d}s  }{ \{u_k(\cdot), \; S_k \}_{k=1}^M}{&x^1(0) = x_{\mathrm{init}}, \quad x_k(S_M)= x_{\mathrm{goal}}, \\
	& x'_{k}(s) = f_{\sigma[k]}(x_k(s), u_k(s)), \\
	& x_{k+1}(0) = x_k(S_{k}), \\
	& x_k(s) \in \mathcalOld{X}_{\mathrm{free}} \cap \mathcalOld{X}_{\mathrm{valid}}, \quad u_k(s) \in \mathcalOld{U},
} \label{eq:online_smoothing}
\end{equation}
where the optimization variables are the control signals $u_k(\cdot)$ and lengths $S_k$ of the $M$ phases. The difference compared to the optimal path planning problem \eqref{eq:hybrid_ocp} is that the combinatorial aspect of selecting the system mode sequence is already specified. However, since the length of the phases are optimization variables, it is possible that redundant phases introduced by the lattice planner are removed by selecting their lengths to zero. Furthermore, the second combinatorial aspect of selecting how to pass obstacles is implicitly encoded in the warm-start solution from the lattice planner.

The problem in~\eqref{eq:online_smoothing} is in the form of a standard multiphase \textsc{ocp}, where the subsequent phases are connected using equality constraints. This problem can be solved using \textsc{noc}, for example by applying a direct method to reformulate the problem as an \textsc{nlp} problem~\cite{rao2009survey}. Today, there exist high-performing open-source \textsc{nlp} software such as \textsc{ipopt}~\cite{wachter2006implementation}, \textsc{wohrp}~\cite{bueskens2013worhp}, etc., that can be used to solve these types of problems. 
Common for such \textsc{nlp} solvers is that they aim at minimizing both the constraint violation and the objective function value~\cite{nocedal2006numerical}. Hence, a good initialization strategy should consider both the objective function and feasibility. In this work, the resolution optimal solution $\{m_k\}_{k=1}^M$ from the lattice planner is used to initialize the \textsc{nlp} solver. It represents a path that is not only dynamically feasible, but also where each phase in the path (i.e each motion primitive) has been computed by optimizing the \emph{same} cost function $L(x,u,q)$ as in~\eqref{eq:online_smoothing}. Hence, the \textsc{nlp} solver is provided with a well-informed warm-start,  which in general will decrease the time for the \textsc{nlp} solver to converge to a locally optimal~solution~\cite{nocedal2006numerical}. Furthermore, when the same objective function is used both in \eqref{eq:planning} and \eqref{eq:online_smoothing}, the \textsc{nlp} solver will in general be initialized close to a good local minimum. Finally, a benefit of using a feasible initialization is that it is \emph{always} guaranteed that a dynamically feasible solution exists that is homotopic with the provided initialization (at the very least the initialization itself), making it reliable to use online.  Due to all these properties, the step of solving~\eqref{eq:online_smoothing} is referred to as an improvement step in this work, which is somewhat in contrast to previous work where this step is commonly denoted ``smoothing''. Its primary aim is to improve the solution obtained from the lattice planner in terms of improving the objective function value. 
\vspace{2pt}
\begin{myrem}
	Note that the improvement step also can be used to enable path planning from and to arbitrary initial and goal states that are not within the specified state space discretization $\mathcalOld{X}_d$. Here, the lattice planner can be used to find a path from and to the closest states in $\mathcalOld{X}_d$, and the improvement step can then adapt the path such that it starts at the initial state and reaches the goal state exactly. However, in this case the warm-start cannot be guaranteed to be feasible.
\end{myrem}
\vspace{2pt}
After the improvement step is applied, a solution with lower objective function value compared to the solution from the lattice planner will in general be obtained since, relating back to Section~\ref{sec:latticeConnect}, the discretization constraints in the bilevel formulation are removed, and the paths are constructed while explicitly considering obstacles. 

\subsection{Proposed path planning approach}

\begin{algorithm}[t]
	\caption{Proposed path planning approach}
	\label{alg:ppas}
	\begin{algorithmic}[1]
		\State \textbf{Offline}:
		\State \textsc{input}: $x(s)$, $\mathcalOld{X}_{\mathrm{valid}}$, $u(s)$,  $\mathcalOld{U}$, $q \in \mathcalOld{Q}$, $f_q (x(s),u(s)) \in \mathcalOld{F}$ and $L_u(m)$, $ L_l(x, u, q)$ and $L(x, u, q)$.
		\State Choose $\mathcalOld{X}_d$ and select $(x_i^0, x_i^g, q_i), \; i = \{1 \ldots K\}$ 
		\State $\mathcalOld{P} \leftarrow$ solve $K$ \textsc{ocp}s \eqref{eq:llup} disregarding obstacle constraints. 
		\State \textbf{Online}: 
		\State \textsc{input}: $x_{\mathrm{init}}, \; x_{\mathrm{goal}}, \; \mathcalOld{X}_{\mathrm{free}}$. 
		\State \textsc{lattice planner} : $\{ m_k, q_k \}_{k=1}^M$ $\leftarrow$ Solve \eqref{eq:planning} from  $x_{\mathrm{init}}$ to $x_{\mathrm{goal}}$ with $\mathcalOld{P}$.
		\State \textsc{improvement} : $\{ x_k(\cdot), u_k(\cdot), S_k \}_{k=1}^M$ $\leftarrow$ Solve \eqref{eq:online_smoothing} with $ \sigma = \{q_k \}_{k=1}^M$, warm-started with $\{ m_k \}_{k=1}^M$.

	\end{algorithmic}
\end{algorithm}
A solution to the path planning problem is found using a preparation step offline and a two-step procedure online according to Algorithm~\ref{alg:ppas}. In the preparation step, the objective functions used in the motion primitive generation and graph search in the lattice planner and the improvement step are specified. Furthermore, the system modes with associated vehicle models (used in the motion primitive generation and improvement step) are defined. Then, the motion primitive set $\mathcalOld{P}$ is computed by solving the \textsc{ocp}s defined by the user without considering obstacles. For a detailed explanation of this step, the reader is referred to~\cite{pivtoraiko2009differentially, bergman2019improvedArxiv}. 

The first step online is called whenever a new path planning problem from $x_{\mathrm{init}}$ to $x_{\mathrm{goal}}$ should be solved. In this step, a lattice planner is used to solve the approximate path planning problem~\eqref{eq:planning} by using the precomputed motion primitive set $\mathcalOld{P}$ and the current description of the available free space $\mathcalOld{X}_{\mathrm{free}}$. The solution is a resolution optimal path, where the system mode is kept constant in each phase. This solution is used as a well-informed warm-start to the final improvement step, where the multiphase \textsc{ocp} in \eqref{eq:online_smoothing} is solved to locally optimality by improving the continuous aspects of the solution from the lattice planner.

\section{Numerical Results} \label{sec:Res}
In this section, the proposed path planning approach is applied to two different
vehicular systems; a car and a truck and trailer system. 
During online planning, the lattice planner is implemented using A$^*$ graph search, where a precomputed free-space heuristic look-up table (\textsc{hlut})~\cite{knepper2006high} is used as heuristic function to guide the search process. The \textsc{hlut} is computed by solving path planning problems in an obstacle-free environment from all initial states $x \in \mathcalOld{X}_d$ with a position at the origin to all final states $x \in \mathcalOld{X}_d$ with a position within a square centered around the origin with side length $\rho$ (in the experiments, $\rho=40$ m for the car and $\rho=80$ m for the truck and trailer system). The motion primitive generation and the improvement step are both implemented in Python using Cas\textsc{ad}i~\cite{andersson2018casadi}, where the warm-start friendly \textsc{sqp} method \textsc{worhp} is used as \textsc{nlp} solver. All simulations are performed on a laptop computer with an Intel Core i7-5600U processor.
\subsection{Vehicle models}
The model of the car is based on a kinematic bicycle model~\cite{lavalle2006planning} with state vector
$x_c(s) = (\bar{x}_c(s),  \alpha(s), \omega(s) )$, where  $\bar{x}_c(s) = (x_1(s), y_1(s), \theta_1(s))$. Here, $(x_1,y_1)$ is the center of the rear axle of the car, $\theta_1$ is the car's orientation, $\alpha$ is the front-wheel steering angle and $\omega$ is the steering angle rate. The vehicle model is
\begin{equation} \label{eq:carModel}
\begin{aligned}
&\bar{x}_c'(s) = q \left(\cos \theta_1(s), \sin \theta_1(s), \frac{\tan \alpha(s)} { L } \right)^T,\\
& \alpha '(s) = \omega(s), \quad \omega'(s) = u_{\omega}(s),
\end{aligned}
\end{equation}
where $u_{\omega}$ is the continuous control signal to the system which represents the steering angle acceleration, $L=2.9$ m the wheel-base of the car and $q \in \{1, -1\}$ is the discrete decision variable representing the direction of motion. The constraints imposed on the states and control signal are given by $ |\alpha(s)| \leq \pi/4$, $|\omega(s)| \leq 0.5$ and $|u_{\omega}(s)| \leq 40$. The cost function used for the car is given by
\begin{equation} \label{eq:ObjectiveCar}
L_c(x_c, u_{\omega},q) = 
1 + \gamma (\alpha^2 + 10\omega^2 + u _{\omega} ^2),   \\ 
\end{equation}
where the variable $\gamma$ represents the trade-off between path length and smoothness of the solution. The truck and trailer system is a general 2-trailer with a car-like truck~\cite{altafini2002hybrid}. This system is composed of three interconnected vehicle segments; a car-like truck, a dolly and a semitrailer. The state vector for this system is given by
$x_t(s) = (\bar{x}_t(s),  \alpha(s), \omega(s) )$ where $\bar{x}_t(s) = (x_3(s), y_3(s), \theta_3(s), \beta_3(s), \beta_2(s))$. Here, $(x_3,y_3)$ is the center of the axle of the semitrailer, $\theta_3$ is the orientation of the semitrailer, $\beta_3$ is joint angle between the semitrailer and the dolly, $\beta_2$ is joint angle between the dolly and the car-like truck. The truck's steering angle $\alpha$ and its derivatives, $\omega$ and $u_\omega$, are subject to the same constraints as in the car-case. The model of this system can compactly be represented as (see~\cite{ljungqvist2017lattice} for details):
\begin{equation} \label{eq:truckModel}
\begin{aligned}
&\bar{x}_t'(s) = q f_t(\bar{x}_t(s), \alpha(s) ), \\
& \alpha '(s) = \omega(s), \quad \omega'(s) = u_{\omega}(s),
\end{aligned}
\end{equation}
where $q \in \{1, -1\}$ also in this case represents the direction of motion. The model parameters used in this section coincide with what is used in~\cite{ljungqvist2017lattice}. 
The cost function used for the truck and trailer system is given by
\begin{equation} \label{eq:ObjectiveTruck}
\small
\begin{aligned}
&L_t(x_t, u_{\omega}, q) = 
& \begin{cases}
1 + \gamma (\alpha^2 + 10\omega^2 + u _{\omega} ^2), &q = 1,  \\ 
1 + \gamma (\beta_3^2 + \beta_2^2 + \alpha^2 + 10\omega^2 + u _\omega ^2), &q = -1, 
\end{cases}
\end{aligned} 
\end{equation}
i.e., quadratic penalties for large joint angles $\beta_3$ and $\beta_2$ are added to the cost function for paths in backward motion to avoid so-called jack-knife configurations. Unless stated otherwise, $\gamma = 1$ is used in both \eqref{eq:ObjectiveCar} and \eqref{eq:ObjectiveTruck}.

\subsection{State lattice construction}
To illustrate the full potential of the proposed approach, three different motion primitive sets for each vehicle are used by the lattice planner, where the sets use either simplified or complete vehicle models. The first motion primitive sets $\mathcalOld{P}_{\mathrm{dyn}}$ use the complete vehicle models. The second sets $\mathcalOld{P}_{\mathrm{kin}}$ disregard $\omega$ and $u_\omega$, making the steering angle $\alpha$ considered as control signal (i.e., purely kinematic models), which is similar to the initialization strategy used in~\cite{zhang2018autonomous}. The third sets $\mathcalOld{P}_{\mathrm{geo}}$ are computed by completely neglecting the system dynamics, further referred to as a geometric model, where instead linear interpolation is used between the initial and final states for each motion primitive. 

Before computing the motion primitive sets, the state space of the vehicles need to be discretized. The positions, $(x_1,y_1)$ for the car and $(x_3,y_3)$ for the semitrailer, are discretized onto a uniform grid with resolution $r=1$ m and the orientations $\theta_1\in\Theta$ and $\theta_3\in\Theta$ are irregularly discretized as proposed in~\cite{pivtoraiko2009differentially}. The discretization of the steering angle $\alpha$ is only applicable for the complete models. For simplicity, it is here constrained to zero and its rate $\omega$ is also constrained to zero to ensure that $\alpha$ is continuously differentiable, even when motion segments are combined online~\cite{ljungqvist2017lattice}. For the truck and trailer system, the joint angle $\beta_3$ and $\beta_2$ are also constrained to zero at each discretized state in the state lattice. Note however that on the path between two discretized states, the systems can take any feasible vehicle configuration. 

The motion primitive sets are automatically computed using the approach described in~\cite{bergman2019improvedArxiv}, where the sets are composed of heading changes and parallel maneuvers according to Table~\ref{tab:primSets}. These maneuvers are optimized using the cost functions defined in~\eqref{eq:ObjectiveCar} and~\eqref{eq:ObjectiveTruck}. For the simplified vehicle models, the neglected states are disregarded in the cost functions. For a more detailed description of the state lattice construction, the reader is referred to~\cite{bergman2019improvedArxiv}.

\begin{table}[t]
	\caption{ \small A description of the different motion primitive sets used. $\lvert \Theta \rvert $ defines the number of heading discretization points, $\Delta_{\theta}^{\mathrm{max}}$ defines which neighboring headings to connect (from 1 to $\Delta_{\theta}^{\mathrm{max}}$) and $n_{\mathrm{par}}$ defines the number of parallel maneuvers (per heading). Finally, $n_{\mathrm{prim}}$ defines the resulting total number of motion primitives.} \label{tab:primSets}	
	\normalsize
	\centering
	\begin{tabular}{ccccc}	
		$\mathcalOld{P}$ &  $\lvert \Theta \rvert $ & $\Delta_{\theta}^{\mathrm{max}}$ & $n_{\mathrm{par}}$ & $n_{\mathrm{prim}}$ \\
		\hline
		$\mathcalOld{P}_{\mathrm{geo}}$ & 16 & 2  & N/A & 240   \\ 
		$\mathcalOld{P}_{\mathrm{kin}}$  & 16 & 4 & 3   & 480  \\ 
		$\mathcalOld{P}_{\mathrm{dyn}}$  & 16 & 4 & 3   & 480 \\
		\hline
	\end{tabular}
\end{table}

\subsection{Experimental results}
For the car model, two different path planning scenarios are considered; a parallel parking problem (Fig.~\ref{fig:par_park}) and one with multiple routes to avoid obstacles (Fig.~\ref{fig:mult_hom}). For the truck and trailer system, a loading site area is used (Fig.~\ref{fig:loading_site}). The obstacles and vehicles are implemented using bounding circles~\cite{lavalle2006planning}; the area of the car is described by three circles, while the truck is described by one circle and the trailer by two circles. This choice of obstacle representation can be used in all steps since the constraints can be described by smooth functions. An alternative object representation that is perfectly compatible with the approach presented in this work is proposed in~\cite{zhang2018optimization}, where vehicles and obstacles can be represented by general convex sets. 

The path planning problems are first solved by the lattice planner, using the three different motion primitive sets described in Table~\ref{tab:primSets}. Thereafter, the obtained solutions are used to initialize the improvement step. For the simplified models, all states that are not represented are initialized to zero.

For the car scenarios in Fig.~\ref{fig:par_park}-\ref{fig:mult_hom}, the results in Table~\ref{tab:par_park}-\ref{tab:mult_hom} show that the lattice planner achieves the lowest computation times if the geometric model is used, compared to the kinematic and the complete model. However, using this simple initialization strategy results in a decreased reliability (only 62 \% and 54.5 \% success rate) and the total average computation time becomes higher than the two other cases due to a more computationally demanding improvement step. The kinematic initialization performs better than the geometric in terms of reliability, but in a cluttered environment (Table~\ref{tab:mult_hom}) the success rate is only 75.3 \%. When the complete model is used in the lattice planner, the computation time for the improvement step is significantly reduced compared to when the simpler initialization strategies are used. In particular, the total computation time including the lattice planner is as much as halved and the success rate is always 100~\%. Furthermore, the mean objective function value $\overline{J}_{\mathrm{opt}}$ decreases significantly compared to the solution from the lattice planner $\overline{J}_P$. For the two simpler initialization strategies, no comparable objective function values from the lattice planner exist since they are infeasible to the actual path planning~problem.

\begin{figure}
	\centering
	\setlength\figureheight{0.18\textwidth}
	\setlength\figurewidth{0.36\textwidth}
	\input{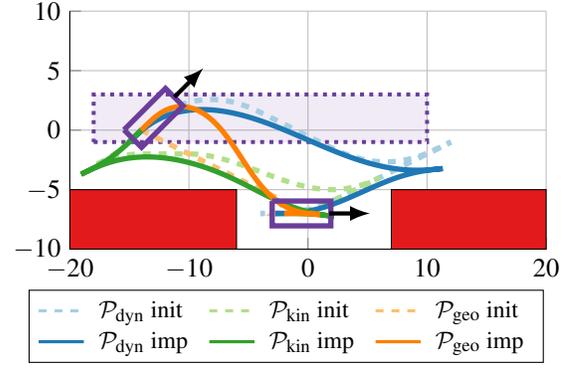}
	\definecolor{mycolor1}{rgb}{0.6510,0.8078,0.8902}%
\definecolor{mycolor2}{rgb}{0.1216,0.4706,0.7059}%
\definecolor{mycolor3}{rgb}{0.6980,0.8745,0.5412}%
\definecolor{mycolor4}{rgb}{0.2000,0.6275,0.1725}%
\definecolor{mycolor5}{rgb}{0.9922,0.7490,0.4353}%
\definecolor{mycolor6}{rgb}{1.0000,0.4980,0}%
\begin{tikzpicture}

\begin{axis}[%
hide axis,
width=0,
height=0,
at={(0,0)},
scale only axis,
xmin=0,
xmax=1,
ymin=0,
ymax=1,
axis background/.style={fill=white},
xmajorgrids,
ymajorgrids,
legend style={legend cell align=left,column sep=3pt, align=center},
legend columns=2,transpose legend
]
\addplot [color=mycolor1, line width=1.2pt, dashed]
table[row sep=crcr]{%
	1	0\\
};
\addlegendentry{ {\small $\mathcalOld{P}_{\mathrm{dyn}}$ init} }

\addplot [color=mycolor2, line width=1.2pt]
table[row sep=crcr]{%
	1	0\\
};
\addlegendentry{ {\small $\mathcalOld{P}_{\mathrm{dyn}}$ imp} }

\addplot [color=mycolor3, line width=1.2pt, dashed]
table[row sep=crcr]{%
	1	0\\
};
\addlegendentry{ {\small $\mathcalOld{P}_{\mathrm{kin}}$ init} }

\addplot [color=mycolor4, line width=1.2pt]
table[row sep=crcr]{%
	1	0\\
};

\addlegendentry{ {\small $\mathcalOld{P}_{\mathrm{kin}}$ imp} }

\addplot [color=mycolor5, line width=1.2pt, dashed]
table[row sep=crcr]{%
	1	0\\
};
\addlegendentry{ {\small $\mathcalOld{P}_{\mathrm{geo}}$ init} }

\addplot [color=mycolor6, line width=1.2pt]
table[row sep=crcr]{%
	1	0\\
};
\addlegendentry{ {\small $\mathcalOld{P}_{\mathrm{geo}}$ imp} }

\end{axis}
\end{tikzpicture}%
	\caption{\small \label{fig:par_park} Parallel parking scenario solved from several initial states with $\theta_1^i = \{0, \pi/4\}$ (indicated by area within the dotted lines). Solutions from one problem are illustrated for the three initialization strategies (using the motion primitive sets described in Table~\ref{tab:primSets}) with corresponding solutions from the improvement step. }
\end{figure} 

\begin{figure}
	\centering
	\setlength\figureheight{0.25\textwidth}
	\setlength\figurewidth{0.32\textwidth}
	\input{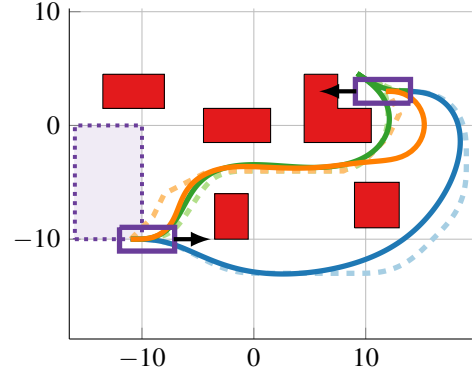}
	\caption{\small \label{fig:mult_hom} A problem with solved from several initial states with $\theta^i_1 = 0$ with multiple routes to get to the goal state $(x^g_1,y^g_1,\theta^g_1, \alpha^g) = (13,3,\pi,0)$. See Fig.~\ref{fig:par_park} for a further description of the content. }
\end{figure} 
\begin{table}[t]
	\caption{ \small Results from parallel parking scenario (Fig.~\ref{fig:par_park}, 150 problems). $\mathcalOld{P}$ is the motion primitive set used in the lattice planner. $\overline{t}_{P}$ is the average time for the lattice planner to find a solution. $r_{\mathrm{imp}}$ and $\overline{t}_{\mathrm{imp}}$ are the success rate and average time for the improvement step to converge. $\overline{t}_{\mathrm{tot}}$ is the average total time. Finally, $\overline{J}_{\mathrm{P}}$ and $\overline{J}_{\mathrm{imp}}$ is the average objective function value for the solutions from the lattice planner and improvement step, respectively.  } \label{tab:par_park}	
	\normalsize
	\centering
	\begin{tabular}{ccccccc}	
		$\mathcalOld{P}$ & $\overline{t}_{P}$ [s] & $\overline{t}_{\mathrm{imp}}$ [s] & $\overline{t}_{\mathrm{tot}}$ [s] & $r_{\mathrm{imp}}$  &$\overline{J}_P$ & $\overline{J}_{\mathrm{imp}}$   \\
		\hline
		$\mathcalOld{P}_{\mathrm{geo}}$ & 0.0011 & 1.12 & 1.12   & 62 \% & N/A & 30.8  \\
		$\mathcalOld{P}_{\mathrm{kin}}$ & 0.025  & 1.03  & 1.06  & 90.7 \% & N/A & 28.7 \\
		$\mathcalOld{P}_{\mathrm{dyn}}$ & 0.014 & 0.88  & 0.894  & 100 \% & 35.7 & 27.5  \\ 
		\hline
	\end{tabular}
\end{table}

\begin{table}[t]
	\caption{\small Results from multiple routes scenario (Fig.~\ref{fig:mult_hom}, 77 problems). See Table~\ref{tab:par_park} for a description of the variables.  } \label{tab:mult_hom}	
	\normalsize
	\centering
	\begin{tabular}{ccccccc}	
		$\mathcalOld{P}$ & $\overline{t}_{P}$ [s] & $\overline{t}_{\mathrm{imp}}$ [s] & $\overline{t}_{\mathrm{tot}}$ [s] & $r_{\mathrm{imp}}$ &$\overline{J}_{P}$ & $\overline{J}_{\mathrm{imp}}$   \\
		\hline
		$\mathcalOld{P}_{\mathrm{geo}}$ & 0.03  & 4.82 & 4.85 & 54.5 \% & N/A & 60.5  \\
		$\mathcalOld{P}_{\mathrm{kin}}$ & 0.36  & 3.81 & 4.17  & 75.3 \% & N/A & 50.0 \\
		$\mathcalOld{P}_{\mathrm{dyn}}$ & 0.31 & 2.14 & 2.45 & 100 \% & 59.9 & 50.2  \\ 
		\hline
	\end{tabular}
\end{table} 
The results for the truck and trailer system (Fig.~\ref{fig:loading_site}) are summarized in Table~\ref{tab:load_1}. In this experiment, using a dynamically feasible initialization (as proposed in this work) has an even larger impact on the time spent in the improvement step; the average time using the geometric and kinematic models has dropped from 18.4 s and 15 s, respectively, down to 9.5 s for the dynamic model. 
The reason why such a large computational performance gain is obtained in this more advanced scenario is mainly due to the complicated system dynamics, which also affect the reliability using a geometric initialization strategy where the success rate is less than \mbox{$50$ \%}. Finally, the reliability for the kinematic initialization is higher compared to the car scenarios. This is mainly due to a less cluttered environment, which enables a higher success rate for the kinematic initialization strategy.

In Table~\ref{tab:load_objective} the impact of using the complete vehicle model and the same or different objective functions at the three steps; the motion primitive generation, the graph search in the lattice planner and the improvement step is analyzed. The results on row 1 ($\gamma_u = \gamma_l = \gamma_i = 10$) represent the baseline where the same objective function is used in all steps. When shortest path is used as objective function in the graph search ($\gamma_u = 0$, row 2 in Table~\ref{tab:load_objective}), the average cost for a path is increased by roughly 10 \%, due to that the improvement step converges to a worse local minimum. However, the total computation time decreases, as a result of a faster graph search. This is mainly due to that Euclidean distance is used as heuristic function outside the range of the \textsc{hlut}, which is a better estimator of cost to go when solely shortest path is used as objective function. The computation time for the improvement step is similar to using $\gamma_u = 10$, which is reasonable since each phase (i.e. motion primitive) in the warm-start is optimized using the same objective function as in the improvement step. When also the motion primitives are generated using shortest path as objective function ($\gamma_u = \gamma_l = 0$, row 3 in Table~\ref{tab:load_objective}), not only the average solution cost increases, but also the convergence time for the improvement step. The reason is that each phase in the initialization is far from a local minimum in terms of the objective function used in the improvement step. This clearly illustrates the importance of using the same objective function in the motion primitive generation and improvement step for fast convergence in the latter step.

\begin{table}[h]
	\caption{\small Results from loading site scenario (Fig.~\ref{fig:loading_site}, 270 problems). See Table~\ref{tab:par_park} for a description of the variables.   } \label{tab:load_1}	
	\normalsize
	\centering
	\begin{tabular}{ccccccc}	
		$\mathcalOld{P}$ & $\overline{t}_{P}$ [s]  & $\overline{t}_{s}$ [s] & $\overline{t}_{\mathrm{tot}}$ [s] & $r_{\mathrm{imp}}$ &$\overline{J}_P$ & $\overline{J}_{\mathrm{imp}}$   \\
		\hline
		$\mathcalOld{P}^{}_{\mathrm{geo}}$  & 0.69 & 18.4 & 19.1 & 43.7 \% & N/A & 236  \\
		$\mathcalOld{P}^{}_{\mathrm{kin}}$  & 6.5 & 15.0 & 21.5 & 98.9 \% & N/A & 164 \\
		$\mathcalOld{P}^{}_{\mathrm{dyn}}$ & 5.35 & 9.45 & 14.8 & 100 \% & 184 & 164  \\ 
		\hline
	\end{tabular}
\end{table}

\begin{figure}
	\centering
	\setlength\figureheight{0.256\textwidth}
	\setlength\figurewidth{0.32\textwidth}
	\input{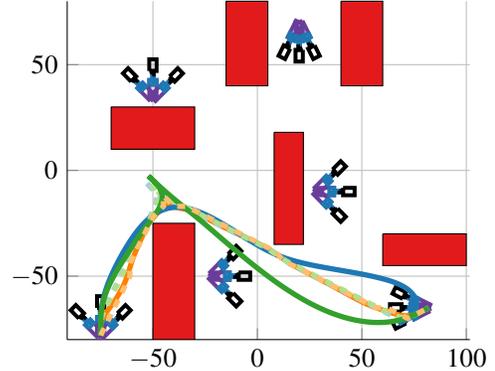}
	\caption{\small \label{fig:loading_site} Loading site scenario using the truck and trailer system, solved from and to several initial and goal states. See Fig.~\ref{fig:par_park} for a further description of the content.}
	\vspace{-5pt} 
\end{figure}

\begin{table}[h!]
	\caption{\small Results from loading site scenario (Fig.~\ref{fig:loading_site}, 270 problems). $\gamma_u$, $\gamma_l$ and $\gamma_{i}$ are the values of $\gamma$ in \eqref{eq:ObjectiveTruck} used in the graph search, the motion primitive generation and improvement step, respectively. See Table~\ref{tab:par_park} for a description of the other variables.    } \label{tab:load_objective}	
	\normalsize
	\centering
	\begin{tabular}{ccccccccc}	
		$\mathcalOld{P}$ & $\gamma_u$ & $\gamma_l$ & $\gamma_i$ & $\overline{t}_{P}$ & $\overline{t}_{\mathrm{s}}$ & $\overline{t}_{\mathrm{tot}}$  & $r_{\mathrm{imp}}$ & $\overline{J}_{\mathrm{imp}}$   \\
		\hline
		$\mathcalOld{P}^{}_{\mathrm{dyn}}$ & 10 & 10  & 10 & 3.2 & 6.2 & 9.4 & 100 \% & 190 \\
		$\mathcalOld{P}^{}_{\mathrm{dyn}}$ & 0  & 10  & 10 & 2.2 & 5.9 & 8.1 & 100 \% & 208 \\
		$\mathcalOld{P}^{}_{\mathrm{dyn}}$ & 0 &  0  &  10 & 1.9  & 12.6 & 14.5  & 100 \% & 212 \\
		\hline
	\end{tabular}
\end{table}
\section{Conclusions and Future Work}
This paper presents a unified optimization-based path planning approach to efficiently compute high-quality locally optimal solutions to path planning problems. The approach is motivated by first showing that a lattice-based path planner can be cast and analyzed as a bilevel optimization problem. This information is then used to motivate a novel, tight combination of a lattice-based path planner and numerical optimal control. 
The first step of the proposed approach consists of using a lattice-based path planner to find a resolution optimal solution to the path planning problem using a discretized search space. This solution is then used as a well-informed warm-start in a second improvement step where numerical optimal control is used to compute a locally optimal solution to the path planning problem. To tightly couple the two steps, the lattice-based path planner uses a vehicle model and objective function that are chosen to coincide with those used in the second improvement step. This combination of a path planner and numerical optimal control makes, in a structured way, benefit of the former method's ability to solve combinatorial parts of the problem and the latter method's ability to obtain locally optimal solutions not restricted to a discretized search space. The value of this tight combination is thoroughly investigated with successful results in several practically relevant path planning problems 
where it is shown to outperform previously existing common initialization strategies in terms of computation time, numerical reliability, and objective function value.

Future work includes to further decrease the online planning time by, e.g., applying the improvement step in a receding horizon fashion or to only optimize parts of the solution obtained from the lattice-based path planner. Another extension is to apply the approach to systems with more distinct system modes, such as morphable drones~\cite{falanga2019foldable}.
\section{Acknowledgments}
This work was partially supported by FFI/VINNOVA and the Wallenberg Artificial Intelligence, Autonomous Systems and Software Program (WASP) funded by Knut and Alice Wallenberg Foundation.
\bibliographystyle{IEEEabbrv}
\bibliography{myrefs.bib}		

\begin{thebibliography}{10}

\bibitem{altafini2002hybrid}
C.~Altafini, A.~Speranzon, and K.-H. Johansson.
\newblock Hybrid control of a truck and trailer vehicle.
\newblock In {\em Hybrid Systems: Computation and Control}, pages 21--34.
  Springer, 2002.

\bibitem{andersson2018casadi}
J.~A.~E. Andersson, J.~Gillis, G.~Horn, J.~B. Rawlings, and M.~Diehl.
\newblock {CasADi} -- {A} software framework for nonlinear optimization and
  optimal control.
\newblock {\em Math. Programming Computation}, In Press, 2018.

\bibitem{andreasson2015fast}
H.~Andreasson, J.~Saarinen, M.~Cirillo, T.~Stoyanov, and A.~J. Lilienthal.
\newblock Fast, continuous state path smoothing to improve navigation accuracy.
\newblock In {\em IEEE Int. Conf. on Robotics and Automation (ICRA)}, pages
  662--669, 2015.

\bibitem{bergman2018combining}
K.~Bergman and D.~Axehill.
\newblock Combining homotopy methods and numerical optimal control to solve
  motion planning problems.
\newblock In {\em IEEE Intell. Vehicles Symp. (IV)}, pages 347--354, 2018.

\bibitem{bergman2019improvedArxiv}
K.~Bergman, O.~Ljungqvist, and D.~Axehill.
\newblock Improved optimization of motion primitives for motion planning in
  state lattices.
\newblock {\em arXiv e-prints}, page arXiv:1810.07470, Feb. 2019.

\bibitem{bueskens2013worhp}
C.~B\"uskens and D.~Wassel.
\newblock The {ESA NLP} solver {WORHP}.
\newblock In G.~Fasano and J.~D. Pintér, editors, {\em Modeling and
  Optimization in Space Engineering}, volume~73, pages 85--110. Springer New
  York, 2013.

\bibitem{campos2017hybrid}
L.~Campos-Mac{\'\i}as, D.~G{\'o}mez-Guti{\'e}rrez, R.~Aldana-L{\'o}pez,
  R.~de~la Guardia, and J.~I. Parra-Vilchis.
\newblock A hybrid method for online trajectory planning of mobile robots in
  cluttered environments.
\newblock {\em IEEE Robotics and Automation Letters}, 2(2):935--942, 2017.

\bibitem{colson2007overview}
B.~Colson, P.~Marcotte, and G.~Savard.
\newblock An overview of bilevel optimization.
\newblock {\em Annals of operations research}, 153(1):235--256, 2007.

\bibitem{faisca2007parametric}
N.~P. Fa{\'\i}sca, V.~Dua, B.~Rustem, P.~M. Saraiva, and E.~N. Pistikopoulos.
\newblock Parametric global optimisation for bilevel programming.
\newblock {\em J. of Global Optimization}, 38(4):609--623, 2007.

\bibitem{falanga2019foldable}
D.~Falanga, K.~Kleber, S.~Mintchev, D.~Floreano, and D.~Scaramuzza.
\newblock The foldable drone: A morphing quadrotor that can squeeze and fly.
\newblock {\em IEEE Robotics and Automation Lett.}, 4(2):209--216, 2019.

\bibitem{grossmann2002review}
I.~E. Grossmann.
\newblock Review of nonlinear mixed-integer and disjunctive programming
  techniques.
\newblock {\em Optimization and engineering}, 3(3):227--252, 2002.

\bibitem{hedlund1999optimal}
S.~Hedlund and A.~Rantzer.
\newblock Optimal control of hybrid systems.
\newblock In {\em Proc. of the 38th IEEE Conf. on Decision and Control},
  volume~4, pages 3972--3977, 1999.

\bibitem{knepper2006high}
R.~A. Knepper and A.~Kelly.
\newblock High performance state lattice planning using heuristic look-up
  tables.
\newblock In {\em IROS}, pages 3375--3380, 2006.

\bibitem{lavalle2006planning}
S.~M. LaValle.
\newblock {\em Planning Algorithms}.
\newblock Cambridge University Press, Cambridge, UK, 2006.

\bibitem{ljungqvist2017lattice}
O.~Ljungqvist, N.~Evestedt, M.~Cirillo, D.~Axehill, and O.~Holmer.
\newblock Lattice-based motion planning for a general 2-trailer system.
\newblock In {\em IEEE Intell. Vehicles Symp. (IV)}, 2017.

\bibitem{nocedal2006numerical}
J.~Nocedal and S.~J. Wright.
\newblock {\em Numerical Optimization}.
\newblock Springer, 2006.

\bibitem{oliveira2018combining}
R.~Oliveira, M.~Cirillo, B.~Wahlberg, et~al.
\newblock Combining lattice-based planning and path optimization in autonomous
  heavy duty vehicle applications.
\newblock In {\em IEEE Intell. Vehicles Symp. (IV)}, pages 2090--2097, 2018.

\bibitem{paden2016survey}
B.~Paden, M.~{\v{C}}{\'a}p, S.~Z. Yong, D.~Yershov, and E.~Frazzoli.
\newblock A survey of motion planning and control techniques for self-driving
  urban vehicles.
\newblock {\em IEEE Trans. on Intell. Vehicles}, 1(1):33--55, 2016.

\bibitem{piazzi2007mmb}
A.~Piazzi, C.~G.~L. Bianco, and M.~Romano.
\newblock $\eta^3$-splines for the smooth path generation of wheeled mobile
  robots.
\newblock 23(5):1089--1095, 2007.

\bibitem{pivtoraiko2011kinodynamic}
M.~Pivtoraiko and A.~Kelly.
\newblock Kinodynamic motion planning with state lattice motion primitives.
\newblock In {\em IEEE/RSJ Int. Conf. on Intell. Robots and Systems (IROS)},
  pages 2172--2179, 2011.

\bibitem{pivtoraiko2009differentially}
M.~Pivtoraiko, R.~A. Knepper, and A.~Kelly.
\newblock Differentially constrained mobile robot motion planning in state
  lattices.
\newblock {\em J. of Field Robotics}, 26(3):308--333, 2009.

\bibitem{rao2009survey}
A.~V. Rao.
\newblock A survey of numerical methods for optimal control.
\newblock {\em Advances in the Astronautical Sci.}, 135(1):497--528, 2009.

\bibitem{rouchon1993flatness}
P.~Rouchon et~al.
\newblock Flatness, motion planning and trailer systems.
\newblock In {\em Proc. of 32nd IEEE Conf. on Decision and Control}, pages
  2700--2705 vol.3, Dec 1993.

\bibitem{stoneman2014embedding}
S.~Stoneman and R.~Lampariello.
\newblock Embedding nonlinear optimization in {RRT}$^*$ for optimal kinodynamic
  planning.
\newblock In {\em Proc. of the 53rd IEEE Conf. on Decision and Control}, pages
  3737--3744, 2014.

\bibitem{wachter2006implementation}
A.~W{\"a}chter and L.~T. Biegler.
\newblock On the implementation of an interior-point filter line-search
  algorithm for large-scale nonlinear programming.
\newblock {\em Math. programming}, 106(1):25--57, 2006.

\bibitem{yang2010analytical}
K.~Yang and S.~Sukkarieh.
\newblock An analytical continuous-curvature path-smoothing algorithm.
\newblock 26(3):561--568, 2010.

\bibitem{zhang2018optimization}
X.~{Zhang}, A.~{Liniger}, and F.~{Borrelli}.
\newblock Optimization-based collision avoidance.
\newblock {\em arXiv e-prints}, page arXiv:1711.03449, June 2018.

\bibitem{zhang2018autonomous}
X.~Zhang, A.~Liniger, A.~Sakai, and F.~Borrelli.
\newblock Autonomous parking using optimization-based collision avoidance.
\newblock In {\em 2018 IEEE Conf. on Decision and Control (CDC)}, pages
  4327--4332, 2018.

\end{thebibliography}
\end{document}